\documentclass[12pt]{amsart}

\usepackage{latexsym}
\usepackage{psfrag}
\usepackage{amsmath}
\usepackage{amssymb}
\usepackage{epsfig}
\usepackage{amsfonts}
\usepackage{amscd}
\usepackage{mathrsfs}
\usepackage{graphicx}
\usepackage{enumerate}

\oddsidemargin 0in
\textwidth 6.5in
\evensidemargin 0in
\topmargin -.4in
\textheight 9in



\parindent0pt
\parskip1.6ex



\newtheorem{theorem}{Theorem}

\newtheorem{corollary}[theorem]{Corollary}

\begin{document}

\title[A-G Series for Schur]{Andrews-Gordon Type Series for Schur's Partition Identity}

\author[Kur\c{s}ung\"{o}z]{Ka\u{g}an Kur\c{s}ung\"{o}z}
\address{Faculty of Engineering and Natural Sciences, Sabanc{\i} University, \.{I}stanbul, Turkey}
\email{kursungoz@sabanciuniv.edu}

\subjclass[2010]{05A17, 05A15, 11P84}

\keywords{integer partition, partition generating function, Andrews-Gordon identities, Schur's partition identity}

\date{December 2018}

\begin{abstract}
\noindent
  We construct an evidently positive multiple series as 
  a generating function for partitions satisfying the multiplicity condition in 
  Schur's partition theorem.  
  Refinements of the series when parts in the said partitions are classified according to 
  their parities or values mod 3 are also considered.  
  Direct combinatorial interpretations of the series are provided.  
\end{abstract}

\maketitle

\section{Introduction}
\label{secIntro}

Schur's partition identity~\cite{Schur} can be seen as an unexpected continuation 
of the line starting with Euler's partition identity~\cite{Euler}, 
and Rogers-Ramanujan identities~\cite{RR}.  
Euler's partition identity deals with partitions into 1-distinct parts, 
and Rogers-Ramanujan identities with partitions into 2-distinct part.  
Here, a partition into $d$-distinct means the pairwise difference of parts is at least $d$~\cite{AE}.

\begin{theorem}[Schur's partition identity]
\label{thmSchur}
  For any non-negative integer $n$, 
  the number of partitions of $n$ into parts that are 3-distinct 
  and that contain no consecutive multiples of 3 
  equals the number of partitions of $n$ into parts that are $\pm 1 \pmod{6}$.  
\end{theorem}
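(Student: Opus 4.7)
The plan is to compare generating functions. Partitions into parts $\equiv \pm 1 \pmod 6$ have generating function
$$\prod_{n=0}^{\infty}\frac{1}{(1-q^{6n+1})(1-q^{6n+5})}$$
by the usual product expansion, so it suffices to show that the same product enumerates Schur partitions, i.e., partitions with parts $3$-distinct and containing no consecutive multiples of~$3$.

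I would follow the functional-equation approach of Andrews. Classify nonempty Schur partitions by the residue of their smallest part $s$ modulo~$3$, giving refined generating functions $S_{0}(x,q)$, $S_{1}(x,q)$, $S_{2}(x,q)$, where $x$ is a dilation parameter that tracks the current minimum-allowed part so that one can shift the constraint on the tail. The forbidden-difference rules (gap at least $3$ in general, gap at least $4$ when the smaller part is a multiple of~$3$) translate each $S_{i}$ into a linear combination of shifted $S_{j}$'s via a ``peel off the smallest part'' argument, producing a coupled system of three $q$-difference equations.

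Decoupling this system yields a single $q$-difference equation satisfied by $S(x,q)=1+S_{0}+S_{1}+S_{2}$; iterating it expresses $S(1,q)$ as an explicit infinite sum whose $n$th summand records the contribution from Schur partitions with exactly $n$ parts. The main obstacle is then the nontrivial identification of that sum with the product above: one has to recognize it as an instance to which Jacobi's triple product identity (or the quintuple product) applies, and then cancel common factors to recover only the classes $\pm 1 \pmod 6$. If the product extraction turned out to be stubborn algebraically, I would fall back on a bijective route such as Bressoud's merge-and-split construction, which maps a Schur partition directly to a partition into parts $\equiv \pm 1 \pmod 6$ of the same weight, thereby by-passing the product evaluation entirely.
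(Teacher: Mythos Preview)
The paper does not prove Theorem~\ref{thmSchur} at all: it is stated in the introduction as Schur's classical 1926 result, with a citation to~\cite{Schur}, and is then \emph{used} as an input in the proof of Corollary~\ref{corSchur}. There is therefore no ``paper's own proof'' to compare your proposal against. The paper's actual contribution is Theorem~\ref{thmMain}, which constructs an explicit triple series for the Schur-partition side and, combined with the already-known Theorem~\ref{thmSchur}, yields the product identity of Corollary~\ref{corSchur}.

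As for the proposal itself: your outline follows the standard Andrews $q$-difference-equation route and is broadly correct in spirit. Your reformulation of the gap condition (gap $\geq 3$ always, gap $\geq 4$ when the smaller part is a multiple of~$3$) is accurate, and the ``peel off the smallest part'' recursion does give a solvable coupled system. However, the write-up is a sketch rather than a proof: the steps ``decoupling yields a single $q$-difference equation,'' ``iterating expresses $S(1,q)$ as an explicit sum,'' and ``recognize it as an instance of Jacobi's triple product'' each hide real work, and in Andrews' actual argument the product is identified not by a triple-product evaluation of a sum but by verifying that the product satisfies the same functional equation with the same initial conditions. The fallback to Bressoud's bijection is a legitimate alternative, but invoking it as a contingency does not substitute for carrying out either argument. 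If you were asked to supply a proof here, you would need to commit to one route and execute the computations.
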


A partition of a positive integer $n$ is a non-decreasing sequence of non-negative integers 
$0 \leq \lambda_1 \leq \lambda_2 \leq \cdots \leq \lambda_k$ that sum up to $n$.  
$\lambda_i$'s are called parts, and $k$ is the number of parts.  
$n$ is called the weight of the partition.  
For instance, there are 5 partitions of 4 into positive parts:  
\begin{align*}
 4, \; 1+3, \; 2+2, \; 1+1+2, \; 1+1+1+1.  
\end{align*}
The only partition of zero is agreed to be the empty partition.  
Sometimes we will allow zeros to appear in partitions.  
Presence of zeros does not change the weight, but it changes the length.  
We will use partitions both into positive parts, and into non-negative parts 
throughout, but we will make it explicit when we allow zeros.  

If we call the former kind of partitions in Theorem \ref{thmSchur} $s(n)$, 
then the theorem is 
\begin{align}
\label{eqSchur}
  \sum_{n \geq 0} s(n) q^n = \frac{1}{ (q; q^6)_{\infty} (q^5; q^6)_{\infty} }
\end{align}

Here and elsewhere in the note, we use the standard $q$-series notations~\cite{TheBlueBook}
\begin{align*}
 (a; q)_n & = \prod_{j = 1}^n (1 - a q^{j-1}), \\
 (a; q)_{\infty} & = \lim_{n \to \infty} (a; q)_n = \prod_{j = 1}^\infty (1 - a q^{j-1}), 
\end{align*}
and a partition generating function is a formal series 
\begin{align*}
 \sum_{n \geq 0} a_n q^n
\end{align*}
where $a_n$ is the number of partitions of $n$ satisfying the given criteria.  

It is straightforward to see that the right-hand side of \eqref{eqSchur} 
generates partitions into parts that are $\pm 1 \pmod{6}$~\cite{AE}.  
The next challenge was to write a generating function for $s(n)$ 
with a direct combinatorial interpretation.  
By this, we mean an evidently positive series in the form of 
the left-hand side of, say, one of Andrews-Gordon identities~\cite{PNAS}
and its combinatorial interpretation~\cite{Bres, KK-parity}.  
\begin{align}
\label{eqAG}
  \sum_{n_1, \ldots, n_{k-1}} 
  \frac{ q^{ N_1^2 + \cdots + N_{k-1}^2 } }{ (q; q)_{n_1} \cdots (q; q)_{n_{k-1}} }
  = \prod_{ \substack{n \geq 1 \\ n \not\equiv 0, \pm k \pmod{2k+1}} } \frac{ 1 }{ (1 - q^n) }, 
\end{align}
where $N_r = n_{r} + n_{r+1} + \cdots + n_{k-1}$.  

One such generating function is given in~\cite{Alladi-Gordon} as 
\begin{align*}
  \sum_{m, n \geq 0} s(m, n) x^m q^n
  = \sum_{i, j, k \geq 0} 
    \frac{ q^{ \frac{3}{2}i^2 - \frac{7}{2}i + \frac{3}{2}j^2 - \frac{5}{2}j + 3k^2 - 3k 
      + 3(ij+ik+hk) } x^{i+j+k} }
    { (q^3; q^3)_i (q^3; q^3)_j (q^3; q^3)_k }
\end{align*}
where $s(m, n)$ is the number of partitions enumerated by $s(n)$ having $m$ parts.  

Recently, another series is given in~\cite{ABM}.  
\begin{align*}
  \sum_{m, n \geq 0} s(m, n) x^m q^n
  = \sum_{m, n \geq 0} 
    \frac{ (-1)^n q^{ (3n+m)^2 + \frac{m(m-1)}{2} } x^{m+2n} }
    { (q; q)_m (q^6; q^6)_n}
\end{align*}
However, it is not immediately clear that the last series generate positive coefficients for all $q^n$ 
upon expansion as a formal power series.  
The authors of~\cite{ABM} also remark the scarcity of series or multiple series generating functions 
for $s(n)$ in the literature.  

Our aim in this paper is to construct another such series for $s(m, n)$, 
and hence for $s(n)$ with a direct combinatorial interpretation in Section \ref{secMain}.  
The ideas are similar to those in~\cite{KK-Cpr}, 
but missing key parts as stated in~\cite{KK-Cpr} are discovered here.  
Then, in sections \ref{secMod2} and \ref{secMod3}, 
we refine the series to account for parts that fall in distinct residue classes 
modulo 2 and 3, respectively.  
We conclude with further open problems.  

\section{The Main Triple Series}
\label{secMain}

\begin{theorem}
\label{thmMain}
  For $n, m \in \mathbb{N}$, 
  let $s(m, n)$ denote the number of partitions of $n$ into $m$ parts 
  which are pairwise at least three apart, and multiples of 3 are at least 6 apart.  
  Then,
  \begin{align}
  \nonumber
    & \sum_{m, n \geq 0} s(m, n) x^m q^n\\
  \label{eqMainSeries}
    = & \sum_{ \substack{n_1 \geq 0 \\ n_{21}, n_{22} \geq 0} } \frac
      { q^{ 6 n_{21}^2 - n_{21} + 6 n_{22}^2 + n_{22} + 2 n_1^2 - n_1 
	    + 12 n_{21} n_{22} + 6 (n_{21} + n_{22})n_1 } 
	x^{ 2 n_{21} + 2 n_{22} + n_{1} } }
      { (q; q)_{n_{1}} (q^{6}; q^{6})_{n_{21}} (q^{6}; q^{6})_{n_{22}} }.  
  \end{align}
\end{theorem}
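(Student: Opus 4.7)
The plan is to give each term of \eqref{eqMainSeries} a direct combinatorial interpretation, in the spirit of~\cite{KK-Cpr,KK-parity}. I will associate to each Schur partition a triple $(n_1, n_{21}, n_{22}) \in \mathbb{N}^3$ that records three kinds of ``blocks'': $n_{21}$ pairs of type~$21$, meaning two consecutive parts both $\equiv 1 \pmod{3}$ differing by exactly $3$; $n_{22}$ pairs of type~$22$, similarly with residue~$2$; and $n_1$ singleton parts that do not participate in such a pair. A canonical decomposition rule is needed to handle maximal chains of consecutive same-residue parts at mutual distance $3$; I would pair greedily from the bottom of each such chain, leaving any leftover part as a singleton. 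Note that $2n_{21} + 2n_{22} + n_1$ is the total number of parts, matching the $x$-exponent.

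For each triple $(n_1, n_{21}, n_{22})$ I would construct an explicit \emph{base partition} $\lambda^{0}$, the minimum-weight Schur partition of that class. The numerator exponent in \eqref{eqMainSeries} then decomposes into three ``self-interaction'' contributions ($6n_{21}^2 - n_{21}$ from type-$21$ pairs alone, packed as $1, 4, 7, \ldots, 6n_{21}-2$; $6n_{22}^2 + n_{22}$ from type-$22$ pairs alone, packed as $2, 5, 8, \ldots, 6n_{22}-1$; and $2n_1^2 - n_1$ from singletons alone, packed as $1, 5, 9, \ldots, 4n_1-3$), together with two cross-interaction terms ($12 n_{21} n_{22}$ and $6(n_{21}+n_{22})n_1$) that record the forced upward shifts when the three block types are merged subject to the Schur inequalities. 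A short arithmetic check verifies that the weight of $\lambda^{0}$ matches the numerator exponent, and that $\lambda^{0}$ is genuinely Schur-valid.

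Having fixed the base, I would define three commuting families of moves: each type-$21$ pair may be slid upward as a rigid unit by a nonnegative multiple of $3$ per part (weight increase $6$ per unit); similarly for each type-$22$ pair; and each singleton may be slid up by any nonnegative integer (weight increase $1$ per unit); in every family the shifts along the block are required to be weakly non-decreasing. The corresponding generating functions are precisely the three $q$-Pochhammer denominators $(q;q)_{n_1}^{-1}$, $(q^{6};q^{6})_{n_{21}}^{-1}$, $(q^{6};q^{6})_{n_{22}}^{-1}$. The main obstacle is to prove that this ``base-plus-moves'' description is a bijection: first, that no move violates the Schur conditions (straightforward, since pairs stay rigid and boundaries only open up); and second, the harder direction, that every Schur partition is reached from exactly one triple via exactly one move sequence, with the triple recoverable via the canonical decomposition. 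The subtlety flagged as missing in~\cite{KK-Cpr} is essentially the behaviour of chains of three or more same-residue parts at mutual distance $3$, where the pair/singleton split is a priori ambiguous; the key step is to show that the greedy rule is exactly compatible with the shift-based generation, so that the inversion is consistent across all such chains.
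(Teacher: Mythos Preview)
Your high-level plan matches the paper's: decompose each Schur partition into $n_{21}$ pairs of residue $1$, $n_{22}$ pairs of residue $2$, and $n_1$ singletons (greedy from the bottom), identify a minimal-weight base partition for each triple, and encode the excess as three auxiliary partitions generated by $1/(q;q)_{n_1}$, $1/(q^6;q^6)_{n_{21}}$, $1/(q^6;q^6)_{n_{22}}$. The base-weight bookkeeping you outline is correct and agrees with the paper's.

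The genuine gap is in your treatment of the moves. Your assertion that ``no move violates the Schur conditions (straightforward, since pairs stay rigid and boundaries only open up)'' is false. Non-decreasing shifts within a single family keep blocks of that family apart, but nothing prevents a block of one type from colliding with a block of another type. Concretely, take $n_{21}=1$, $n_{22}=0$, $n_1=1$; the base is $[1,4],\,7$. Shifting the pair by $6$ and the singleton by $0$ yields $[7,10],\,7$, which is not even a partition into distinct parts. Equally, your moves as stated can never produce a Schur partition in which a $1\pmod 3$ pair lies \emph{above} a $2\pmod 3$ pair or a singleton, yet such partitions exist and must be hit. So the map you describe is neither well-defined nor surjective, and the families certainly do not commute in any naive sense.

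What the paper supplies here---and what it flags as the ``missing key parts'' from~\cite{KK-Cpr}---is an explicit system of \emph{adjustments}: whenever a forward move on a pair brings it within forbidden distance of the next block, the two blocks exchange positions via a weight-preserving local rearrangement (e.g.\ $[3k+1,3k+4],\,3k+8$ moves to $3k+2,\,[3k+7,3k+10]$; a $1\pmod 3$ pair pushing into a $2\pmod 3$ pair swaps residues with it; and so on). These adjustments are what allow a pair to migrate past singletons and pairs of the other residue while keeping $n_{21},n_{22},n_1$ invariant, and they are what makes the inverse (reading off $\eta^m,\eta^d,\mu$ by backward moves in a prescribed order) well-defined. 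Your proposal correctly identifies the chain ambiguity as a subtlety but does not supply this interaction mechanism; without it the argument does not go through.
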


{\bf Remark: } Once the theorem is proven, it is easy to see that the series \eqref{eqMainSeries}
converges for $x = 1$ (and of course for $\vert q \vert < 1$)~\cite{A-Schur, ABM}.  

\begin{proof}
 Throughout the proof, $\lambda$ will denote a partition enumerated by $s(m, n)$.  
 Clearly, $\lambda$ has distinct parts, 
 the pairwise difference between parts is at least three, 
 and consecutive multiples of 3 cannot appear.  
 Equivalently, if there are successive parts which are exactly three apart, 
 they must be both $1 \pmod 3$ or $2 \pmod 3$.  
 Given $\lambda$, we find the maximum number of such pairs.  
 We indicate those pairs in brackets such as 
 \begin{align*}
  [3k+1, 3k+4], \textrm{ or } [3k+2, 3k+5],
 \end{align*}
 when expressing $\lambda$.  
 We call $[3k+1, 3k+4]$ a $1 \pmod 3$ pair, and $[3k+2, 3k+5]$ a $2 \pmod 3$ pair.  
 The only ambiguity arises when there is a streak of parts 
 \begin{align*}
  3k+1, 3k+4, \ldots, 3k+3s-2, \textrm{ or }
  3k+2, 3k+5, \ldots, 3k+3s-1.  
 \end{align*}
 In either case, we pair the leftmost parts first, and continue recursively for yet unbound parts.  
 There will be instances below where we use the rightmost pair instead of the leftmost one first 
 and continue with the rightmost unbound pair, 
 but the number of pairs will not change.  
 
 Once the pairing is complete, 
 we call the number of $1 \pmod 3$ pairs $n_{21}$, 
 the number of $2 \pmod 3$ pairs $n_{22}$, 
 and the number of the remaining parts, namely \emph{singletons}, $n_1$.  
 Singletons have difference at least four with either the preceding or the succeeding part, or both, 
 because otherwise there would have been one more pair.  
 
 We will show that such $\lambda$ corresponds to a unique quadruple 
 $(\beta, \mu, \eta^m, \eta^d)$, where 
 \begin{itemize}
 
  \item the base partition $\beta$ is enumerated by $s(m, n)$, 
    has $n_{21}$ $1 \pmod 3$ pairs, $n_{22}$ $2 \pmod 3$ pairs, $n_1$ singletons, 
    and minimum weight; 
  
  \item $\mu$ is a partition into $n_1$ parts (allowing zeros); 
  
  \item $\eta^m$ is a partition into $n_{21}$ multiples of 6 (allowing zeros); 
  
  \item $\eta^d$ is a partition into $n_{22}$ multiples of 6 (allowing zeros).  
  
 \end{itemize}
 
 We will transform $\lambda$ to $(\beta, \mu, \eta^m, \eta^d)$ 
 through a unique series of backward moves, 
 and $(\beta, \mu, \eta^m, \eta^d)$ to $\lambda$ through a unique series of forward moves.  
 The two series of moves will be inverses to each other.  
 
 Before describing these transformations, we discuss the moves.  
 They will be useful in the construction of the base partition $\beta$.  
 
 The forward or backward moves on a $1 \pmod 3$ pair is basically
 \begin{align}
 \label{moves1mod3pair}
  [\mathbf{3k+1, 3k+4}] 
  \begin{array}{c}
   \underrightarrow{ \textrm{one forward move} } \vspace{2mm} \\
   \overleftarrow{ \textrm{one backward move} }.  
  \end{array}
  [\mathbf{3k+4, 3k+7}]
 \end{align}
 Notice that both change the weight by 6.  
 We highlight the moved pair.  
 
 Of course, the pair may be immediately preceded or succeeded by other pairs or singletons.  
 \begin{align*}
  (\textrm{parts} \leq 3k-2), & [\mathbf{3k+1, 3k+4}], [3k+8, 3k+11], [3k+14, 3k+17], \ldots, \\
    & [3k+2+6s, 3k+5+6s], (\textrm{parts } \geq 3k+10+6s) 
 \end{align*}
 \begin{align*}
 & \bigg\downarrow \textrm{one forward move} 
 \end{align*}
 \begin{align*}
  (\textrm{parts} \leq 3k-2), & [\mathbf{3k+4}, \underbrace{\mathbf{3k+7}], [3k+8}_{!}, 3k+11], [3k+14, 3k+17], \ldots, \\
    & [3k+2+6s, 3k+5+6s], (\textrm{parts } \geq 3k+10+6s) 
 \end{align*}
 \begin{align*}
 & \bigg\downarrow \textrm{adjustment} 
 \end{align*}
 \begin{align*}
 (\textrm{parts} \leq 3k-2), & [3k+2, 3k+5], [\mathbf{3k+10}, \underbrace{\mathbf{3k+13}], [3k+14}_{!}, 3k+17], \ldots, \\
    & [3k+2+6s, 3k+5+6s], (\textrm{parts } \geq 3k+10+6s) 
 \end{align*}
 \begin{align*}
 & \bigg\downarrow \textrm{after } (s-1) \textrm{ similar adjustments} 
 \end{align*}
 \begin{align}
 \nonumber
  (\textrm{parts} \leq 3k-2), & [3k+2, 3k+5], [3k+8, 3k+11], \ldots, [3k-4+6s, 3k-1+6s], \\
 \label{movefwd1mod3pairadjust}
    & [\mathbf{3k+4+6s, 3k+7+6s}], (\textrm{parts } \geq 3k+10+6s), 
 \end{align}
 for $s \geq 1$.  
 The move increases the weight by 6.  
 The adjustments preserve the weight.  
 The inverse of the above operation is a backward move on the indicated $1 \pmod 3$ pair.  
 \begin{align*}
  (\textrm{parts} \leq 3k-2), & [3k+2, 3k+5], [3k+8, 3k+11], \ldots, [3k-4+6s, 3k-1+6s], \\
    & [\mathbf{3k+4+6s, 3k+7+6s}], (\textrm{parts } \geq 3k+10+6s), 
 \end{align*}
 \begin{align*}
 & \bigg\downarrow \textrm{one backward move} 
 \end{align*}
 \begin{align*}
  (\textrm{parts} \leq 3k-2), & [3k+2, 3k+5], [3k+8, 3k+11], \ldots, \\
    & [3k-4+6s, \underbrace{3k-1+6s], [\mathbf{3k+1+6s}}_{!}, \mathbf{3k+4+6s}], \\ 
    & (\textrm{parts } \geq 3k+10+6s), 
 \end{align*}
 \begin{align*}
 & \bigg\downarrow \textrm{adjustment} 
 \end{align*}
 \begin{align*}
  (\textrm{parts} \leq 3k-2), & [3k+2, 3k+5], [3k+8, 3k+11], \ldots, \\
    & [3k-10+6s, \underbrace{3k-7+6s], [\mathbf{3k-5+6s}}_{!}, \mathbf{3k-2+6s}], \\ 
    & [3k+2+6s, 3k+5+6s] (\textrm{parts } \geq 3k+10+6s), 
 \end{align*}
 \begin{align*}
 & \bigg\downarrow \textrm{after } (s-1) \textrm{ similar adjustments} 
 \end{align*}
 \begin{align}
 \nonumber
  (\textrm{parts} \leq 3k-2), & [\mathbf{3k+1, 3k+4}], [3k+8, 3k+11], [3k+14, 3k+17], \ldots, \\
  \label{movebackwd1mod3pairadjust}
    & [3k+2+6s, 3k+5+6s], (\textrm{parts } \geq 3k+10+6s).  
 \end{align}
 
%
  
 \begin{align*}
  (\textrm{parts } \leq 3k-2), [\mathbf{3k+1, 3k+4}], 3k+8, (\textrm{parts } \geq 3k+14)
 \end{align*}
 \begin{align*}
 & \bigg\downarrow \textrm{one forward move} 
 \end{align*}
 \begin{align*}
  (\textrm{parts } \leq 3k-2), [\mathbf{3k+4}, \underbrace{\mathbf{3k+7}], 3k+8}_{!}, (\textrm{parts } \geq 3k+14)
 \end{align*}
 \begin{align*}
 & \bigg\downarrow \textrm{adjustment} 
 \end{align*}
 \begin{align}
 \label{movefwd1mod3paircaseC1}
  (\textrm{parts } \leq 3k-2), 3k+2, [\mathbf{3k+7, 3k+10}], (\textrm{parts } \geq 3k+14).  
 \end{align}
 
 \begin{align*}
  (\textrm{parts } \leq 3k-2), [\mathbf{3k+1, 3k+4}], 3k+8, 3k+12, (\textrm{parts } \geq 3k+16)
 \end{align*}
 \begin{align*}
 & \bigg\downarrow \textrm{one forward move} 
 \end{align*}
 \begin{align*}
  (\textrm{parts } \leq 3k-2), [\mathbf{3k+4}, \underbrace{\mathbf{3k+7}], 3k+8}_{!}, 3k+12, (\textrm{parts } \geq 3k+16)
 \end{align*}
 \begin{align*}
 & \bigg\downarrow \textrm{adjustment} 
 \end{align*}
 \begin{align*}
  (\textrm{parts } \leq 3k-2), 3k+2 [\mathbf{3k+7}, \underbrace{\mathbf{3k+10}], 3k+12}_{!}, (\textrm{parts } \geq 3k+16)
 \end{align*}
 \begin{align*}
 & \bigg\downarrow \textrm{one more adjustment} 
 \end{align*}
 \begin{align}
 \label{movefwd1mod3paircaseC2}
  (\textrm{parts } \leq 3k-2), 3k+2, 3k+6, [\mathbf{3k+10, 3k+13}], (\textrm{parts } \geq 3k+16).  
 \end{align}
 
 In both cases \eqref{movefwd1mod3paircaseC1} and \eqref{movefwd1mod3paircaseC2}, 
 the forward move adds 6 to the weight of the partition, 
 and the adjustments do not change it.  
 The inverses of \eqref{movefwd1mod3paircaseC1} and \eqref{movefwd1mod3paircaseC2} 
 defining the corresponding backward moves are straightforward to describe.  
 
 Clearly, cases \eqref{movefwd1mod3pairadjust}, \eqref{movefwd1mod3paircaseC1} and \eqref{movefwd1mod3paircaseC2} 
 leave out some other possibilities.  
 Still, the remaining cases, along with their inverses, can be treated using combinations of adjustments 
 in \eqref{movefwd1mod3pairadjust}, \eqref{movefwd1mod3paircaseC1} and \eqref{movefwd1mod3paircaseC2}.  
 
 If we want to perform a forward move in 
 \begin{align*}
  (\textrm{parts } \leq 3k-2), [\mathbf{3k+1, 3k+4}], 3k+7, (\textrm{parts } \geq 3k+11), 
 \end{align*}
 we update the pairing first: 
 \begin{align}
 \label{movefwd1mod3pairupdate}
  (\textrm{parts } \leq 3k-2), 3k+1, [\mathbf{3k+4, 3k+7}], (\textrm{parts } \geq 3k+11), 
 \end{align}
 and then perform the forward move on the indicated pair.  
 
 When two $1 \pmod 3$ pairs immediately follow each other as in 
 \begin{align*}
  (\textrm{parts } \leq 3k-2), [3k+1, 3k+4], [3k+7, 3k+10], (\textrm{parts } \geq 3k+13), 
 \end{align*}
 forcing the smaller pair to move forward with some adjustments to follow 
 will result in the appearance that the larger pair has been moved instead.  
 To avoid this, we simply forbid forward moves on $1 \pmod 3$ pairs 
 immediately preceding another.  
 In general, we disallow any forward move causing 
 \begin{align*}
  [3k+4, \underbrace{3k+7], [3k+7}_{!}, 3k+10]
 \end{align*}
 either immediately or after adjustments.  
 
 One can easily show that a forward move on the larger of the immediately succeding $1 \pmod 3$ pairs 
 allows one forward move on the smaller pair.  
 
 The same phenomenon is observed for backward moves with the smaller and larger pairs swapped.  
 
 The above discussion can be suitably adjusted to describe forward and backward moves of $2 \pmod 3$ pairs 
 in all possible cases.  
 
 The moves on singletons are merely adding or subtracting 1's, 
 thus altering the weight by 1 or $-1$, respectively.  
 The distance between two singletons is at least 4, 
 so when the distance is exactly 4 as in 
 \begin{align*}
  (\textrm{parts } \leq k-3), k, k+4, (\textrm{parts } \geq k+7), 
 \end{align*}
 the smaller singleton cannot be moved forward, 
 and the larger singleton cannot be moved backward.  
 
 In some cases we encounter, 
 we will need to move the singletons through pairs as follows.  
 \begin{align*}
  (\textrm{parts } \leq 3k-3), & \mathbf{3k+1}, [3k+4, 3k+7], [3k+10, 3k+13], \\ 
  & \ldots, [3k-2+6s, 3k+1+6s], (\textrm{parts } \geq 3k+5+6s)
 \end{align*}
 \begin{align*}
  \bigg\downarrow \textrm{ regroup the pairs }
 \end{align*}
 \begin{align*}
  & (\textrm{parts } \leq 3k-3), [3k+1, 3k+4], [3k+7, 3k+10], \\ 
  & \ldots, [3k-5+6s, 3k-2+6s], \mathbf{3k+1+6s}, (\textrm{parts } \geq 3k+5+6s)
 \end{align*}
 \begin{align*}
  \bigg\downarrow \textrm{ one forward move }
 \end{align*}
 \begin{align}
 \nonumber
  & (\textrm{parts } \leq 3k-3), [3k+1, 3k+4], [3k+7, 3k+10], \\ 
 \label{movefwdsingletonthrupairs}
  & \ldots, [3k-5+6s, 3k-2+6s], \mathbf{3k+2+6s}, (\textrm{parts } \geq 3k+5+6s)
 \end{align}
 for $s \geq 1$, with another possible regrouping afterwards.  
 The reverse process, 
 i.e. moving the highlighted singleton through the pairs is easily defined.  
 Of course, if there was another singleton $=3k+4+6s$, 
 $3k+1$ could not have been a singleton in the first place 
 because the determination of pairs would have been incorrect.  
 Taking into account the construction of the base partition $\beta$, 
 and the correspondence between $\lambda$ and $(\beta, \mu, \eta^m, \eta^d)$ below, 
 one can show that this pathology can never occur.  
 
 Moving singletons through $2 \mod 3$ pairs is defined likewise.  
 
 Now that we defined all possible moves, 
 we can proceed with the construction of the base partition $\beta$.  
 Recall that $\beta$ is the partition conforming to the constraints 
 set forth by $s(m, n)$, with $n_{21}$ $1 \pmod 3$ pairs, $n_{22}$ $2 \pmod 3$ pairs, 
 $n_1$ singletons, and minimum possible weight.  
 
 Set $n_2 = n_{21} + n_{22}$, and for a moment 
 suppose that $\beta_0$ has $n_2$ $1 \pmod 3$ pairs, no $2 \pmod 3$ pairs, and $n_1$ singletons.  
 In other words, $\beta_0$ consists of $1 \pmod 3$ pairs, and singletons only.  
 We will argue that $\beta_0$ can only be 
 \begin{align}
 \label{primitivebaseptnmain}
  \beta_0 = [1, 4], [7, 10], \ldots, [6n_2-5, 6n_2-2], 
  6n_2+1, 6n_2+5, \ldots, 6n_2+4n_1-3.  
 \end{align}
 Indeed, no further backward moves are possible either on the smallest pair $[1, 4]$ 
 or on the smallest singleton $6n_2+1$, 
 and both the pairs and singletons are tightly packed.  
 
 Now shift the largest $n_{22}$ of the $1 \pmod 3$ pairs as 
 \begin{align*}
  [3k+1, 3k+4] \rightarrow [3k+2, 3k+5]
 \end{align*}
 and do the necessary adjustments.  
 \begin{align*}
  & [1, 4], [7, 10], \ldots, [6n_{21}-5, 6n_{21}-2], 
  [6n_{21}+2, 6n_{21}+5], [6n_{21}+8, 6n_{21}+11], \\
  & \ldots, [6n_{21}+6n_{22}-4, \underbrace{6n_{21}+6n_{22}-1], 
  6n_{21}+6n_{22}+1}_{!}, 6n_{21}+6n_{22}+5, \\ & 
  \ldots, 6n_{21}+6n_{22}+4n_1-3
 \end{align*}
 \begin{align*}
  \bigg\downarrow n_{22} \textrm{ adjustments if } n_1 > 0
 \end{align*}
 \begin{align}
 \nonumber
  \beta = & [1, 4], [7, 10], \ldots, [6n_{21}-5, 6n_{21}-2], 6n_{21}+1, 
  [6n_{21}+5, 6n_{21}+8], \\
 \nonumber
  & [6n_{21}+11, 6n_{21}+14], \ldots, [6n_{21}+6n_{22}-1, 6n_{21}+6n_{22}+2], 
  6n_{21}+6n_{22}+5, \\ 
 \label{baseptnmain}
  & 6n_{21}+6n_{22}+9, \ldots, 6n_{21}+6n_{22}+4n_1-3.  
 \end{align}
 
 Again, one can check that both the pairs, $1 \pmod 3$ and $2 \pmod 3$ alike, 
 and the singletons are tightly packed.  
 No pair or singleton can be moved further backward.  
 Therefore, \eqref{baseptnmain} is the unique base partition $\beta$ 
 we have been looking for.  
 
 The partition $\beta_0$ has weight
 \begin{align*}
  \sum_{j = 1}^{n_2} (12 j - 7) + \sum_{j = 1}^{n_1} (6n_2 + 4j - 3)
  = 6n_2^2 - n_2 + 6n_2 n_1 + 2n_1^2 - n_1.  
 \end{align*}
 Recall that we set $n_2 = n_{21} + n_{22}$, 
 and in passing from \eqref{primitivebaseptnmain} to \eqref{baseptnmain}, 
 we performed $n_{22}$ shifts each of which adds 2 to the weight.  
 Thus, $\beta$ in \eqref{baseptnmain} has weight
 \begin{align*}
  & 6(n_{21} + n_{22})^2 - (n_{21}+n_{22}) + 2 n_{22} + 6 (n_{21} + n_{22})n_1 \\
  & = 6 n_{21}^2 - n_{21} + 6 n_{22}^2 + n_{22} + 2n_1^2 - n_1
    + 12 n_{21} n_{22} + 6 (n_{21} + n_{22}) n_1.  
 \end{align*}
 Clearly, $\beta$ has $2n_{21} + 2n_{22} + n_1$ parts.  
 $\mu$, $\eta^m$, $\eta^d$ are generated by 
 \begin{align*}
  \frac{1}{(q; q)_{n_1}}, \qquad
  \frac{1}{(q^6; q^6)_{n_{21}}}, \qquad
  \frac{1}{(q^6; q^6)_{n_{22}}}, \qquad
 \end{align*}
 respectively.  
 
 We have shown that 
 \begin{align*}
  & \sum_{(\beta, \mu, \eta^m, \eta^d)} 
    q^{ \vert \beta \vert + \vert \mu \vert + \vert \eta^m \vert + \vert \eta^d \vert } 
    x^{l(\beta)} \\
  & = \sum_{ \substack{n_1 \geq 0 \\ n_{21}, n_{22} \geq 0} } \frac
    { q^{ 6 n_{21}^2 - n_{21} + 6 n_{22}^2 + n_{22} + 2 n_1^2 - n_1 
	  + 12 n_{21} n_{22}  + 6 (n_{21} + n_{22})n_1 } 
      x^{ 2 n_{21} + 2 n_{22} + n_{1} } }
    { (q; q)_{n_{1}} (q^{6}; q^{6})_{n_{21}} (q^{6}; q^{6})_{n_{22}} }.  
 \end{align*}
 The proof will be complete once we establish the correspondence 
 between $\lambda$ and $(\beta, \mu, \eta^m, \eta^d)$, 
 so that 
 \begin{align*}
  \sum_{m, n \geq 0} s(m, n) x^m q^n
  = \sum_{\lambda} q^{\vert \lambda \vert} x^{l(\lambda)}
  = \sum_{(\beta, \mu, \eta^m, \eta^d)} 
    q^{ \vert \beta \vert + \vert \mu \vert + \vert \eta^m \vert + \vert \eta^d \vert } 
    x^{l(\beta)}.  
 \end{align*}
 
 Given $\lambda$ enumerated by $s(m, n)$, 
 we first determine the unique numbers $n_{21}$ of $1 \pmod 3$ pairs, 
 $n_{22}$ of $2 \pmod 3$ pairs, and $n_1$ of singletons, 
 so that $m = 2n_{21} + 2n_{22} + n_1$.  
 We move the smallest $1 \pmod 6$ pair $\frac{1}{6}\eta^m_1$ times backward, 
 so that it becomes $[1, 4]$.  
 If the smallest $1 \pmod 3$ pair is already $[1, 4]$, 
 we set $\frac{1}{6}\eta^m_1 = 0$.  
 We continue moving the $s$th smallest $1 \pmod 3$ pair backward 
 so that it becomes $[6s-5, 6s-2]$, 
 and recording the respective number of backward moves as $\frac{1}{6}\eta^m_s$, 
 for $s = 2, 3, \ldots, n_{21}$.  
 Because a backward move on a $1 \pmod 3$ pair 
 allows a backward move on the succeding pair, 
 \begin{align*}
  0 \leq \eta^m_1 \leq \eta^m_2 \leq \cdots \leq \eta^m_{n_ {21}}.  
 \end{align*}
 Parts of $\eta^m$ are all multiples of 6.   
 Also, as the weight of $\lambda$ decreases 6 by 6, 
 the weight of $\eta^m$ increases by the same amount.  
 The intermediate partition looks like
 \begin{align}
 \nonumber
  & [1, 4], [7, 10], \ldots, [6n_{21}-5, 6n_{21}-2], \\ 
 \label{intermptnfromlambda1}
  & (\textrm{parts } \geq 6n_{21} + 1, 
  \textrm{ all } 2 \pmod 3 \textrm{ pairs or singletons}).  
 \end{align}
 It is possible that $n_{21} = 0$, in which case one skips this phase.  
 
 Next, we move the $s$th smallest $2 \pmod 3$ pair as far as possible, 
 and record the number of backward moves as $\frac{1}{6}\eta^d_s$, 
 for $s = 1, 2, \ldots, n_{22}$, in the given order.  
 This determines the partition $\eta^d$ into $n_{22}$ multiples of 6.  
 The details are similar to the $1 \pmod 3$ case above.  
 The intermediate partition looks like
 \begin{align}
 \nonumber
  & [1, 4], [7, 10], \ldots, [6n_{21} - 5, 6n_{21} - 2], 
  [6n_{21} + 2, 6n_{21} + 5], [6n_{21} + 8, 6n_{21} + 11], \\ 
 \label{intermptnbackwdallpairsstowedcaseI}
  & \ldots, [6n_{21} + 6n_{22} - 4, 6n_{21} + 6n_{22} - 1], 
  (\textrm{ singletons } \geq 6n_{21} + 6n_{22} + 2), 
 \end{align}
 if the intermediate partition \eqref{intermptnfromlambda1} 
 contains no singleton $ = 6n_{21} + 1$, and like 
 \begin{align}
 \nonumber
  & [1, 4], [7, 10], \ldots, [6n_{21} - 5, 6n_{21} - 2], 6n_{21} + 1, \\
 \nonumber
  & [6n_{21} + 5, 6n_{21} + 8], [6n_{21} + 11, 6n_{21} + 14], \ldots, 
  [6n_{21} + 6n_{22} - 1, 6n_{21} + 6n_{22} + 2], \\
 \label{intermptnbackwdallpairsstowedcaseII}
  & (\textrm{ singletons } \geq 6n_{21} + 6n_{22} + 5), 
 \end{align}
 if the intermediate partition \eqref{intermptnfromlambda1} 
 contains a singleton $ = 6n_{21} + 1$.  
 Likewise, we skip this phase if $n_{22} = 0$.  
 In either \eqref{intermptnbackwdallpairsstowedcaseI} or \eqref{intermptnbackwdallpairsstowedcaseII}, 
 the singletons are at least 4 apart.  
 
 We finally move the $s$th smallest singleton backward $\mu_s$ times 
 for $s = 1, 2, \ldots, n_1$ to construct $\mu$ and to obtain the base partition $\beta$ 
 as in \eqref{baseptnmain}.  
 
 The above prrocedure is clearly uniquely reversible, 
 which will yield $\lambda$ from $(\beta, \mu, \eta^m, \eta^d)$ by a series of forward moves.  
 This concludes the proof.  
\end{proof}

{\bf Example: } Following the notation in the proof of Theorem \ref{thmMain}, 
we will construct $\lambda$ from the base partition $\beta$ where 
\begin{align*}
 n_{21} = 2, \; n_{22} = 2, \; n_1 = 1, \; 
 \mu = 2, \; \eta^m = 6+6, \; \textrm{ and } \eta^d = 0+6.  
\end{align*}
\begin{align*}
 \beta = [1, 4], [7, 10], \mathbf{13}, [17, 20], [23, 26]
\end{align*}
$\beta$ has weight
\begin{align*}
 6 n_{21}^2 - n_{21} + 6 n_{22}^2 + n_{22} + 2 n_1^2 - n_1 
 + 12 n_{21} n_{22} + 6 (n_{21} + n_{22})n_1
 = 121.  
\end{align*}
\begin{align*}
 \Bigg\downarrow \textrm{ one forward move }
\end{align*}
\begin{align*}
 \beta = [1, 4], [7, 10], \mathbf{14}, [17, 20], [23, 26]
\end{align*}
\begin{align*}
 \Bigg\downarrow \textrm{ regroup pairs }
\end{align*}
\begin{align*}
 \beta = [1, 4], [7, 10], [14, 17], [20, 23], \mathbf{26}
\end{align*}
\begin{align*}
 \Bigg\downarrow \textrm{ another forward move }
\end{align*}
\begin{align*}
 \beta = [1, 4], [7, 10], [14, 17], [\mathbf{20}, \mathbf{23}], 27
\end{align*}
Here is the end of the implementation of $\mu = 2$, which is a partition into $n_1 = 1$ part, 
allowing zeros.  We will continue with the incorporation of $\eta^d = 0+6$ 
as forward moves on the $2 \pmod 3$ pairs.  
\begin{align*}
 \Bigg\downarrow \textrm{ one forward move }
\end{align*}
\begin{align*}
 \beta = [1, 4], [7, 10], [14, 17], [\mathbf{23}, \underbrace{\mathbf{26}], 27}_{!}
\end{align*}
\begin{align*}
 \Bigg\downarrow \textrm{ adjustment }
\end{align*}
\begin{align*}
 \beta = [1, 4], [\mathbf{7}, \mathbf{10}], [14, 17], 21, [26, 29]
\end{align*}
We top it off with the implementation of $\eta^m = 6+6$ as forward moves on the $1 \pmod{3}$ pairs.  
\begin{align*}
 \Bigg\downarrow \textrm{ one forward move }
\end{align*}
\begin{align*}
 \beta = [1, 4], [\mathbf{10}, \underbrace{\mathbf{13}], [14}_{!}, 17], 21, [26, 29]
\end{align*}
\begin{align*}
 \Bigg\downarrow \textrm{ adjustment }
\end{align*}
\begin{align*}
 \beta = [1, 4], [8, 11], [\mathbf{16}, \underbrace{\mathbf{19}], 21}_{!}, [26, 29]
\end{align*}
\begin{align*}
 \Bigg\downarrow \textrm{ adjustment }
\end{align*}
\begin{align*}
 \beta = [\mathbf{1}, \mathbf{4}], [8, 11], 15, [19, 22], [26, 29]
\end{align*}
\begin{align*}
 \Bigg\downarrow \textrm{ one forward move }
\end{align*}
\begin{align*}
 \beta = [\mathbf{4}, \underbrace{\mathbf{7}], [8}_{!}, 11], 15, [19, 22], [26, 29]
\end{align*}
\begin{align*}
 \Bigg\downarrow \textrm{ adjustment }
\end{align*}
\begin{align*}
 \beta = [2, 5], [\mathbf{10}, \underbrace{\mathbf{13}], 15}_{!}, [19, 22], [26, 29]
\end{align*}
\begin{align*}
 \Bigg\downarrow \textrm{ adjustment }
\end{align*}
\begin{align*}
 \beta = [2, 5], 9, [13, 16], [19, 22], [26, 29]
\end{align*}
In the final configuration, 
\begin{align*}
 141 = \vert \lambda \vert 
 = \vert \beta \vert + \vert \mu \vert + \vert \eta^d \vert + \vert \eta^m \vert 
 = 121 + 2 + 6 + 12, 
\end{align*}
as claimed.  

\begin{corollary}
\label{corSchur}
  \begin{align}
  \label{eqCorSchur}
    \sum_{ \substack{n_1 \geq 0 \\ n_{21}, n_{22} \geq 0} } \frac
      { q^{ 6 n_{21}^2 - n_{21} + 6 n_{22}^2 + n_{22} + 2 n_1^2 - n_1 
	    + 12 n_{21} n_{22} + 6 (n_{21} + n_{22})n_1 } }
      { (q; q)_{n_{1}} (q^{6}; q^{6})_{n_{21}} (q^{6}; q^{6})_{n_{22}} }
    = \frac{1}{ (q; q^6)_\infty (q^5; q^6)_\infty }
  \end{align}
\end{corollary}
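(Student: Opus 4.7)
The plan is to obtain this corollary as an immediate specialization of Theorem \ref{thmMain} combined with the classical generating-function form of Schur's partition identity. Since the heavy lifting has already been done in proving Theorem \ref{thmMain}, the corollary should reduce to a one-line observation together with a convergence check.

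First, I would set $x = 1$ on both sides of \eqref{eqMainSeries}. On the left-hand side, $\sum_{m, n \geq 0} s(m, n) q^n$ collapses to $\sum_{n \geq 0} s(n) q^n$, because summing the refined count $s(m,n)$ over all possible part-counts $m$ recovers the total count $s(n)$ of Schur partitions of weight $n$. On the right-hand side, the factor $x^{2n_{21} + 2n_{22} + n_1}$ becomes $1$, leaving exactly the triple series that appears on the left of \eqref{eqCorSchur}. The remark immediately following Theorem \ref{thmMain} notes that the series converges at $x = 1$ for $|q| < 1$, so this specialization is legitimate both as a formal power series identity in $q$ and as an analytic identity in the appropriate disk.

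To finish, I would invoke Theorem \ref{thmSchur} (Schur's partition identity) in its product form \eqref{eqSchur}, namely
\begin{align*}
  \sum_{n \geq 0} s(n) q^n = \frac{1}{(q; q^6)_\infty (q^5; q^6)_\infty},
\end{align*}
and chain the two equalities to arrive at \eqref{eqCorSchur}.

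There is no genuine obstacle in this argument: the corollary is essentially an algebraic repackaging of the combinatorial content already extracted in the proof of Theorem \ref{thmMain}. The only mildly delicate point is the justification of the $x = 1$ evaluation, and that has been addressed by the convergence remark citing \cite{A-Schur, ABM}. Thus the proof amounts to writing these two sentences carefully.
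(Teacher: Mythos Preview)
Your proposal is correct and follows essentially the same route as the paper: set $x=1$ in Theorem~\ref{thmMain} so that the left side collapses to $\sum_{n\geq 0} s(n)q^n$, then apply Schur's identity~\eqref{eqSchur}. The paper's proof is exactly this two-line argument; your added remark about convergence at $x=1$ is a reasonable embellishment but not strictly needed for the formal power series identity.
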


\begin{proof}
 By Theorem \ref{thmMain}, the series on the left hand side of \eqref{eqCorSchur} is 
 \begin{align*}
  \sum_{n \geq 0} \left( \sum_{m \geq 0} s(m, n) \right) q^n
  = \sum_{n \geq 0} s(n) q^n.  
 \end{align*}
 The Corollary follows by Schur's theorem~\cite{Schur}.  
\end{proof}

\section{Counting Parts According to Their Parities}
\label{secMod2}

In this section, we will refine Theorem \ref{thmMain} 
and obtain a multiple series generating function for partitions enumerated by $s(m, n)$
where odd parts and even parts are accounted for separately.  

\begin{theorem}
\label{thmMod2}
  For $m_1, m_0, n \geq 0$, 
  let $sp(m_1, m_0, n)$ be the number of partitions satisfying the conditions of $s(m_1+m_0, n)$ 
  such that $m_1$ of the parts are odd and $m_0$ are even.  Then, 
  \begin{align}
  \nonumber
    & \sum_{m_1, m_0, n \geq 0} sp(m_1, m_0, n) a^{m_1} b^{m_0} q^{n}\\
    \nonumber
    = & \sum_{ \substack{n_{11}, n_{10} \geq 0 \\ n_{21}, n_{22} \geq 0} } \frac
      { q^{ 6 n_{21}^2 - n_{21} + 6 n_{22}^2 + n_{22} + 2 n_{11}^2 - n_{11} + 2 n_{10}^2 } }
      { (q^2; q^2)_{n_{11}} (q^2; q^2)_{n_{10}} (q^{6}; q^{6})_{n_{21}} (q^{6}; q^{6})_{n_{22}} }  \\
      & \times q^{12 n_{21} n_{22} + 6 (n_{21} + n_{22})(n_{11} + n_{10}) + 4 n_{11} n_{10}}
      a^{ n_{21} + n_{22} + n_{11} } 
      b^{ n_{21} + n_{22} + n_{10} }.  
  \label{eqMod2Series}
  \end{align}
\end{theorem}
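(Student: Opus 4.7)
The plan is to mimic the proof of Theorem \ref{thmMain} with a refined accounting of parities. The central observation is structural: both a $1 \pmod 3$ pair $[3k+1, 3k+4]$ and a $2 \pmod 3$ pair $[3k+2, 3k+5]$ consist of one odd and one even part, since the two entries differ by $3$. Moreover, the forward/backward moves on pairs shift both entries by $\pm 3$, so each pair keeps contributing exactly one odd and one even part throughout the process. This fully explains the factor $a^{n_{21}+n_{22}} b^{n_{21}+n_{22}}$: it is forced by the pairs alone. The parity distinction is therefore only meaningful for singletons, which we split into $n_{11}$ odd and $n_{10}$ even singletons, so that $m_1 = n_{21}+n_{22}+n_{11}$ and $m_0 = n_{21}+n_{22}+n_{10}$.

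Next I would upgrade the base partition. Starting from \eqref{baseptnmain}, keep the pair block unchanged (it already has minimum weight for the given $n_{21}, n_{22}$), but now arrange $n_{11}$ odd singletons at the minimum-weight positions $6n_2+1, 6n_2+5, \ldots, 6n_2+4n_{11}-3$, followed by $n_{10}$ even singletons at $6n_2+4n_{11}+2, 6n_2+4n_{11}+6, \ldots, 6n_2+4n_{11}+4n_{10}-2$, where $n_2 = n_{21}+n_{22}$. The jump from the last odd singleton to the first even one is exactly $5$, the minimum gap compatible with both the $\geq 4$ spacing constraint and the required parity switch. A direct summation gives singleton weight
\begin{align*}
6n_2(n_{11}+n_{10}) + 2n_{11}^2 - n_{11} + 2n_{10}^2 + 4n_{11}n_{10},
\end{align*}
and adding the pair contribution from Theorem \ref{thmMain} reproduces the exponent of $q$ in \eqref{eqMod2Series}.

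The remaining step is to redefine the singleton dynamics so that parity is preserved. In Theorem \ref{thmMain} a singleton move was $\pm 1$; here I would require singleton moves to be $\pm 2$, which accounts for the $(q^2;q^2)_{n_{11}}$ and $(q^2;q^2)_{n_{10}}$ denominators (the sums $\eta^{11}, \eta^{10}$ are partitions into even parts, of length $n_{11}, n_{10}$ respectively). Pair moves continue to be $\pm 6$, so $\eta^m$ and $\eta^d$ are unchanged, giving the $(q^6;q^6)_{n_{21}}$ and $(q^6;q^6)_{n_{22}}$ factors. With these dynamics, the same inductive scheme as in Theorem \ref{thmMain} produces, from $\lambda$, a unique quintuple $(\beta,\mu^{(1)},\mu^{(0)},\eta^m,\eta^d)$, and is reversible; this establishes \eqref{eqMod2Series}.

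The main obstacle will be the singleton-through-pairs move \eqref{movefwdsingletonthrupairs}. There the singleton jumps by $6s+1$, an \emph{odd} number, which flips its parity and is therefore forbidden in the refined setting. The fix is to perform such through-pair passages only in compositions that yield an even net displacement: either two consecutive passages (giving a shift of $12s+2$, compatible with a step-of-$2$ move), or a passage coupled with an adjacent in-place $\pm 2$ correction when the pair block sits between two singletons. I would verify case-by-case, using the same adjustment template as after \eqref{movefwd1mod3paircaseC1}--\eqref{movefwd1mod3paircaseC2}, that every forward/backward trajectory needed to reach the base from $\lambda$ can be realized by parity-preserving composite moves, and, crucially, that the number of such composite moves applied to each singleton is uniquely determined by $\lambda$. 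Once this is checked, the counting is identical to that of Theorem \ref{thmMain} and the theorem follows.
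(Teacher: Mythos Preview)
Your overall strategy coincides with the paper's: pairs always carry one odd and one even part, singleton moves are upgraded from $\pm 1$ to $\pm 2$, and the base partition is obtained from \eqref{baseptnmain} by pushing the $n_{10}$ largest singletons one step forward so that they become even. The weight and exponent computations you give are correct and match the paper.

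Where you go astray is in the identification of the ``main obstacle.'' The singleton-through-pairs passage \eqref{movefwdsingletonthrupairs} is \emph{not} a parity problem: the regrouping itself shifts the singleton by $6$ per pair (even), and only the terminal $+1$ move is odd; once you replace single moves by double moves, the net displacement becomes $6s+2$, which is even, and parity is automatically preserved. No composite fix is needed there. The actual new difficulty, which you do not mention, arises when two \emph{singletons of opposite parity} come too close. In the base partition your last odd singleton and first even singleton are at distance $5$; a double forward move on the odd one brings them to distance $3$, which either violates Schur's condition (if both are $\equiv 0 \pmod 3$) or creates an unwanted pair (otherwise), in each case changing the invariants $n_{21}, n_{22}, n_{11}, n_{10}$. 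The paper handles this with a swap adjustment: from $\ldots, k+2, k+5, \ldots$ one passes to $\ldots, k+1, k+6, \ldots$, which preserves weight and the parity of each singleton while letting the moving one overtake the other. This swap (and its iterates through a streak $k+5, k+9, \ldots, k+4s+1$) is the genuine new ingredient; your case-by-case plan should be directed at this situation rather than at the through-pair passage.
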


\begin{proof}
  We will base the construction on the proof of Theorem \ref{thmMain}.  
  The moves of the 1 or 2 $\pmod 3$ pairs through pairs of the other kind 
  or through singletons will be the same.  
  Also, in each pair, there is exactly one odd and one even part.  
  The difference here is that we need to differentiate the odd and even singletons, 
  and determine how to move them.  
  
  We will be moving singletons 2 times forward (double forward move) 
  or 2 times backward (double backward move), 
  therefore their parity does not change.  
 \begin{align*}
  (\textrm{ parts } \leq k-3), & \mathbf{k}, (\textrm{ parts } \geq k+5) \\
  \textrm{ a double forward move } 
  & \bigg\downarrow \bigg\uparrow 
  \textrm{ a double backward move }
  \\
  (\textrm{ parts } \leq k-3), & \mathbf{k+2}, (\textrm{ parts } \geq k+5).  
 \end{align*}
 
 If a singleton needs to pass through one or more pairs, 
 or when a singleton is tackled by a pair, 
 the singleton is shifted by 6, so that its parity is retained.  
 
 When a move requires adjustments to follow 
 which alters the number of pairs or singletons, 
 we simply do not allow that move.  
 For example, a double backward move on the singleton 8 in the configuration below 
 reduces the number of singletons in the end.  
 \begin{align*}
  [1, 4], \mathbf{8} \quad 
  \underrightarrow{\textrm{a double backward move}} \quad 
  [1, \underbrace{4], 6}_{!} \quad 
  \underrightarrow{\textrm{adjustment}} \quad 
  \mathbf{0}, [4, 7].  
 \end{align*}
 Although $[1, 4], 8$ is not a base partition in the sense of proof of Theorem \ref{thmMain}, 
 it will be a base partition for the purposes of this proof.  
 
 The only case that needs discussed is the two singletons of opposite parities 
 coming \emph{too close} to each other.  
 \begin{align*}
  (\textrm{ parts } \leq k-3), \mathbf{k}, k+5, k+9, \ldots, k+4s+1, 
  (\textrm{ parts } \geq k+4s+6)
 \end{align*}
 \begin{align*}
  \bigg\downarrow \textrm{ a double forward move }
 \end{align*}
 \begin{align*}
  (\textrm{ parts } \leq k-3), \underbrace{\mathbf{k+2}, k+5}_{!}, k+9, \ldots, k+4s+1, 
  (\textrm{ parts } \geq k+4s+6)
 \end{align*}
 This configuration may not be ruled out by the conditions set forth by $s(m, n)$, 
 however it certainly changes the number of singletons and the number of pairs 
 determined prior to it.  Therefore, it must be fixed.  
 \begin{align*}
  \bigg\downarrow \textrm{ adjustment }
 \end{align*}
 \begin{align*}
  (\textrm{ parts } \leq k-3), k+1, \underbrace{\mathbf{k+6}, k+9}_{!}, \ldots, k+4s+1, 
  (\textrm{ parts } \geq k+4s+6)
 \end{align*}
 \begin{align*}
  \bigg\downarrow \textrm{ after } (s-1) \textrm{ similar adjustments }
 \end{align*}
 \begin{align*}
  (\textrm{ parts } \leq k-3), k+1, k+5, k+9, \ldots, k+4s-3, \mathbf{ k+4s+2 }, 
  (\textrm{ parts } \geq k+4s+6)
 \end{align*}
 There may be other cases depending on the presence of pairs 
 immediately succeding $k+4s+1$, 
 but the above constellation is the main difference 
 from the construction in the proof of Theorem \ref{thmMain}.  
 
 Of course, we cannot double move forward the singletons 
 immediately preceding the singletons of the same parity.  
 Yet, when the difference is exactly 4 between two singletons, 
 a double forward move on the larger one allows one double move forward on the smaller.  
 
 The above discussion can be suitably adjusted for double backward moves.  
 
 To find the base partition with $n_{21}$ $1 \pmod 3$ pairs, 
 $n_{22}$ $2 \pmod 3$ pairs, $n_{11}$ odd singletons and $n_{10}$ even singletons, 
 we start with the base partition $\beta$ in \eqref{baseptnmain}.  
 It has $n_{21}$ $1 \pmod 3$ pairs, $n_{22}$ $2 \pmod 3$ pairs, 
 and $n_1 = n_{11} + n_{10}$ singletons.  
 As of now, all singletons are odd.  
 Then, we move the $n_{10}$ largest singletons once forward 
 to get the base partition
 \begin{align}
 \nonumber
 \beta = & [1, 4], [7, 10], \ldots, [6n_{21} - 5, 6n_{21} - 2], 6n_{21}+1, \\
 \nonumber
 & [6n_{21} + 5, 6n_{21} + 8], [6n_{21} + 11, 6n_{21} + 14], 
 \ldots, [6n_{21} + 6n_{22} - 1, 6n_{21} + 6n_{22} + 2], \\
 \nonumber
 & 6n_{21} + 6n_{22} + 5, 6n_{21} + 6n_{22} + 9, \ldots, 6n_{21} + 6n_{22} + 4n_{11} + 1, \\
 \nonumber
 & 6n_{21} + 6n_{22} + 4n_{11} + 6, 6n_{21} + 6n_{22} + 4n_{11} + 10, \\
 \label{baseptnMod2}
 & \ldots, 6n_{21} + 6n_{22} + 4n_{11} + 4n_{10} + 2.  
 \end{align}
 Thanks to the proof of Theorem \ref{thmMain}, 
 $\beta$ in \eqref{baseptnMod2} has weight
 \begin{align*}
  & 6 n_{21}^2 - n_{21} + 6 n_{22}^2 + n_{22} + 2 (n_{11} + n_{10})^2 - (n_{11} + n_{10}) \\
  & + 12 n_{21} n_{22} + 6 (n_{21} + n_{22}) (n_{11} + n_{10}) + n_{10} \\
  = & 6 n_{21}^2 - n_{21} + 6 n_{22}^2 + n_{22} + 2 n_{11}^2 - n_{11} + 2 n_{10}^2 \\
  & + 12 n_{21} n_{22} + 6 (n_{21} + n_{22}) (n_{11} + n_{10}) + 4 n_{10} n_{11}, 
 \end{align*}
 hence the exponent of $q$ in the numerator of the term in the multiple series 
 on the right hand side of \eqref{eqMod2Series}.  
 Clearly, no further double backward moves on the singletons are possible.  
 
 $\beta$ in \eqref{baseptnMod2} has $n_{21} + n_{22} + n_{11}$ odd parts, 
 hence the exponent of $a$ on the right hand side of \eqref{eqMod2Series}; 
 and it has $n_{21} + n_{22} + n_{10}$ even parts, 
 hence the exponent of $b$ in the same place.  
 The double forward moves on the odd or even singletons are in 1-1 correspondence with 
 partitions into $n_{11}$ or $n_{10}$ even parts (zeros allowed), respectively, 
 hence the factor $(q^2; q^2)_{n_{11}}(q^2; q^2)_{n_{10}}$ in the denominator 
 on the right hand side of \eqref{eqMod2Series}.  
\end{proof}

{\bf Example: } In this example, 
we will focus on the double forward of backward moves on singletons.  
The movement of pairs is the same as in the proof of Theorem \ref{thmMain}.  

We will recover the base partition $\beta$, $\mu^1$, $\mu^0$, $\eta^m$, and $\eta^d$ 
from $\lambda = 8 + 13 + 19 + 24$ for which $n_{11} = n_{10} = 2$, $n_{21} = n_{22} = 0$.  
Clearly, $\eta^m$ and $\eta^d$ are both empty partitions.  
We perform the double backward moves on the odd singletons first.  
\begin{align*}
 \lambda = 8, \mathbf{13}, 19, 24 
 \quad \underrightarrow{\textrm{one double backward move}} 
 \quad \underbrace{8, \mathbf{11}}_{!}, 19, 24 
\end{align*}
Notice that there are no problems as far as the conditions of $s(m,n)$ are concerned, 
but letting this intermediate partition stay in this form creates a $2 \pmod{3}$ pair, 
and changes $n_{22}$ and $n_{11}$.  
\begin{align*}
 \underrightarrow{\textrm{adjustment}} 
 \quad \mathbf{7}, 12, 19, 24 
 \quad \underrightarrow{\textrm{three more double backward moves}} 
 \quad 1, 12, \mathbf{19}, 24 
\end{align*}
We deduce that $\mu^1_1 = 8$.  
\begin{align*}
 \underrightarrow{\textrm{two double backward moves}} 
 \quad 1, \underbrace{12, \mathbf{15}}_{!}, 24
 \quad \underrightarrow{\textrm{adjustment}} 
 \quad 1, \mathbf{11}, 16, 24
\end{align*}
The same remark as above applies for the last adjustment.  
\begin{align*}
 \underrightarrow{\textrm{three more double backward moves}} 
 \quad 1, 5, \mathbf{16}, 24
\end{align*}
We now have $\mu^1 = 8+10$.  
At this point, it is easier to recover $\mu^0 = 6 + 10$, 
arriving at 
\begin{align*}
 \beta = 1, 5, 10, 14, 
\end{align*}
and 
\begin{align*}
 30 + 18 + 16 + 0 + 0 
 = \vert \beta \vert + \vert \mu^1 \vert + \vert \mu^0 \vert + \vert \eta^m \vert + \vert \eta^d \vert 
 = \vert \lambda \vert 
 = 64, 
\end{align*}
as Theorem \ref{thmMod2} claims.  

\section{Counting Parts According to Their Values Modulo 3}
\label{secMod3}

In this section, we will refine Theorem \ref{thmMain} 
and obtain a multiple series generating function for partitions enumerated by $s(m, n)$
where parts that are 1, 2, or 0 $\pmod 3$ are accounted for separately.  

\begin{theorem}
\label{thmMod3}
  For $m_1, m_2, m_0, n \geq 0$, 
  let $st(m_1, m_2, m_0, n)$ be the number of partitions satisfying the conditions of $s(m_1+m_2+m_0, n)$ 
  such that $m_i$ of the parts are $\equiv i \pmod 3$ for $i = 1, 2, 0$.  Then, 
  \begin{align}
  \nonumber
    & \sum_{m_1, m_2, m_0, n \geq 0} sp(m_1, m_2, m_0, n) a^{m_1} b^{m_2} c^{m_0} q^{n}\\
    \nonumber
    = & \sum_{ \substack{n_{11}, n_{12}, n_{10} \geq 0 \\ n_{21}, n_{22} \geq 0} } \frac
      { q^{ 6 n_{21}^2 - n_{21} + 6 n_{22}^2 + n_{22} + 3 n_{11}^2 - 2 n_{11} + 3 n_{12}^2 - n_{12} + 3 n_{10}^2 } }
      { (q^3; q^3)_{n_{11}} (q^3; q^3)_{n_{12}} (q^3; q^3)_{n_{10}} (q^{6}; q^{6})_{n_{21}} (q^{6}; q^{6})_{n_{22}} }  \\
      \nonumber
      & \times q^{ 12 n_{21} n_{22} + 6 (n_{21} + n_{22})(n_{11} + n_{12} + n_{10})
		  + 3 n_{11} n_{12} + 3 n_{11} n_{10} + 3 n_{12} n_{10}} \\      
      & \times a^{ 2 n_{21} + n_{11} } 
      b^{ 2 n_{22} + n_{12} } 
      c^{ n_{10} }.  
  \label{eqMod3Series}
  \end{align}
\end{theorem}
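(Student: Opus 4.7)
The plan is to adapt the proof of Theorem~\ref{thmMain} in parallel with the way Theorem~\ref{thmMod2} extends it. The crucial preliminary observation is that every part of a $1 \pmod 3$ pair lies in class $1$ modulo $3$, and every part of a $2 \pmod 3$ pair lies in class $2$; consequently every forward/backward move on pairs described in the proof of Theorem~\ref{thmMain} (including the adjustments in \eqref{movefwd1mod3pairadjust}--\eqref{movefwd1mod3paircaseC2}) preserves not only $m$ and $n$, but also the refined counts $m_1, m_2, m_0$. The pair moves therefore transfer verbatim, producing the $(q^{6}; q^{6})_{n_{21}}^{-1}$ and $(q^{6}; q^{6})_{n_{22}}^{-1}$ factors and the $a^{2n_{21}} b^{2n_{22}}$ contribution in \eqref{eqMod3Series}.

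For singletons, the analogue of the double moves of Theorem~\ref{thmMod2} is the \emph{triple move}: a forward/backward move that alters a singleton by $\pm 3$. This preserves the residue mod $3$, changes the weight by $\pm 3$, and hence places triple moves on class-$i$ singletons in bijection with partitions into $n_{1i}$ non-negative multiples of $3$, accounting for the $(q^{3}; q^{3})_{n_{1i}}^{-1}$ factors. As in the earlier proofs, triple moves that would bring two singletons within distance $3$, or that would create or destroy a pair, are either forbidden or accompanied by residue-preserving adjustments that propagate through adjacent pairs and singletons; I would enumerate these analogues of \eqref{movefwd1mod3pairadjust}--\eqref{movefwd1mod3paircaseC2} and of the singleton-through-pair move \eqref{movefwdsingletonthrupairs}, verifying that each adjusted triple move changes only the targeted singleton's value (by $\pm 3$) and leaves all five counts $n_{21}, n_{22}, n_{11}, n_{12}, n_{10}$ intact.

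The next step is to construct the minimum-weight base partition $\beta$ with the specified counts. The pair skeleton is placed as in~\eqref{baseptnmain}; the $n_1 = n_{11}+n_{12}+n_{10}$ singletons following the pairs must then be rearranged so that their class counts are exactly $n_{11}, n_{12}, n_{10}$. A greedy analysis shows that the minimum-weight arrangement cycles the residues in the order $1, 2, 0, 1, 2, 0, \ldots$, using a gap of $4$ for a $+1 \pmod 3$ transition, a gap of $5$ for a $+2 \pmod 3$ transition, and a gap of $6$ between singletons of the same class, so that no triple backward move is possible once $\beta$ is reached. Summing the positions of the pairs and singletons then recovers exactly the $q$-exponent of \eqref{eqMod3Series}, while counting parts by residue class gives the $a$-, $b$-, and $c$-exponents.

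The main obstacle I anticipate is verifying optimality of this base partition when one or two of $n_{11}, n_{12}, n_{10}$ vanish, and dovetailing the greedy singleton arrangement with the leading singleton $6n_{21}+1$ from \eqref{baseptnmain}, whose class is fixed at $1 \pmod 3$. Once these edge cases are settled, the bijection between $\lambda$ and the sextuple $(\beta, \mu^{1}, \mu^{2}, \mu^{0}, \eta^{m}, \eta^{d})$---with $\mu^{i}$ a partition into $n_{1i}$ non-negative multiples of $3$, and $\eta^{m}, \eta^{d}$ as in Theorem~\ref{thmMain}---is obtained by reading off the pair-move records first (producing $\eta^{m}$ and $\eta^{d}$), and then the triple-move records class by class, mirroring the procedure at the end of the proof of Theorem~\ref{thmMain}.
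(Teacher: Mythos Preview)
Your proposal is correct and follows the same overall strategy as the paper: triple moves on singletons to preserve residues, the same sextuple $(\beta,\mu^1,\mu^2,\mu^0,\eta^m,\eta^d)$, and the identical pair machinery inherited from Theorem~\ref{thmMain}.  The one substantive difference is in how the base partition is reached.  You start from the pair skeleton~\eqref{baseptnmain} and then place the singletons greedily; the paper instead builds the \emph{singleton-only} base partition first---beginning from a ``primitive'' in which the class-$1$, class-$2$, and class-$0$ singletons sit in three separate blocks (each with internal gaps of $6$), then explicitly triple-moving the smallest class-$0$ singleton backward through the class-$2$ block, propagating to the larger class-$0$ singletons, and repeating to interleave class~$2$ with class~$1$---and only afterwards prepends the pairs.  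Both routes land on the same $\beta$ and the same $q$-exponent, but the paper's backward-from-primitive construction makes minimality automatic (each step is a legal backward move, and at the end none remain), whereas your greedy description still owes a proof that no smaller admissible configuration exists.  Your anticipated ``dovetailing'' obstacle with the fixed class-$1$ singleton $6n_{21}+1$ from~\eqref{baseptnmain} is an artefact of starting from the Theorem~\ref{thmMain} skeleton; the paper sidesteps it entirely by inserting the pairs last.
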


\begin{proof}
  The method of this proof will be similar to the proof of Theorem \ref{thmMod2} 
  based on Theorem \ref{thmMain}.  
  The triple forward moves on singletons will preserve their value modulo 3 etc.  
  We leave the details of the triple forward or backward moves 
  bringing singletons in different residue classes $\pmod{3}$ 
  too close together to the reader.  
  
  The novelty here is the construction of the base partition $\beta$.  
  For a moment, assume that there are no pairs, 
  $n_{11}$ $1 \pmod{3}$ singletons, $n_{12}$ $2 \pmod{3}$ singletons, 
  and $n_{10}$ $0 \pmod{3}$ singletons.  
  We line these up as follows: 
  \begin{align}
  \nonumber
    & \{1, 7, \ldots, 6n_{11} - 5 \}, 
    \{ 6n_{11} + 2, 6n_{11} + 8, \ldots, 6n_{11} + 6n_{12} - 4 \}, 
    \\
  \label{basePtnPrimitivemod3}
    & \{ 6n_{11} + 6n_{12} + 3, 6n_{11} + 6n_{12} + 9, \ldots, 6n_{11} + 6n_{12} + 6n_{10} - 3 \}.  
  \end{align}
  The only function of curly braces is to indicate that there are streaks of 
  parts that are the same $\pmod{3}$.  
  Notice that in \eqref{basePtnPrimitivemod3}, the $i \pmod{3}$ singletons 
  are tight in themselves for $i = 1, 2, 0$.  
  None except the smallest one could move further backward.  
  Recall that the singletons are moved thrice at a time, so their value $\pmod{3}$ does not change.  
  The weight of the partition \eqref{basePtnPrimitivemod3} is 
  \begin{align}
  \nonumber
    & 3 n_{11}^2 - 2 n_{11} + 6 n_{12} n_{11} + 3 n_{12}^2 - n_{12} 
	+ 6 n_{11} n_{10} + 6 n_{12} n_{10} + 3 n_{10}^2 
    \\ 
  \label{basePtnPrimitivemod3Weight}
    & = 3 n_{11}^2 - 2 n_{11} + 3 n_{12}^2 - n_{12} + 3 n_{10}^2 
      + 6 n_{12} n_{11} + 6 n_{12} n_{10} + 6 n_{11} n_{10} 
  \end{align}
  Now, observe that the smallest $0 \pmod{3}$ singleton may be triple moved backward twice 
  (i.e. we can subtract 6 from it) with adjustments to follow.  
  We highlight the moved part as usual.   
  \begin{align*}
    & 1, 7, \ldots, 6n_{11} - 5, 
     6n_{11} + 2, 6n_{11} + 8, \ldots, 6n_{11} + 6n_{12} - 10, 
    \\
    & \underbrace{6n_{11} + 6n_{12} - 4, \mathbf{6n_{11} + 6n_{12} - 3}}_{!}, 
    6n_{11} + 6n_{12} + 9, \ldots, 6n_{11} + 6n_{12} + 6n_{10} - 3 
  \end{align*}
  \begin{align*}
   \Bigg\downarrow \textrm{ adjustment }
  \end{align*}
  \begin{align}
  \nonumber
    & 1, 7, \ldots, 6n_{11} - 5, 
     6n_{11} + 2, 6n_{11} + 8, \ldots, 6n_{11} + 6n_{12} - 10, 
    \\
    \label{basePtnPrimitivemod3interm}
    & \mathbf{6n_{11} + 6n_{12} - 6}, 6n_{11} + 6n_{12} - 1, 
    6n_{11} + 6n_{12} + 9, \ldots, 6n_{11} + 6n_{12} + 6n_{10} - 3 
  \end{align}
  The smallest $0 \pmod{3}$ part can be triple moved further backward in a similar fashion 
  for each of the remaining smaller $2 \pmod{3}$ singletons, yielding
  \begin{align*}
    & 1, 7, \ldots, 6n_{11} - 5, 
     6n_{11} + 2, \mathbf{6n_{11} + 6}, 6n_{11} + 11, 6n_{11} + 17, \ldots, 
    \\
    & 6n_{11} + 6n_{12} - 1, 6n_{11} + 6n_{12} + 9, 6n_{11} + 6n_{12} + 15, 
    \ldots, 6n_{11} + 6n_{12} + 6n_{10} - 3.  
  \end{align*}
  In fact, we can perform further triple backward moves on the smallest $0 \pmod{3}$ part, 
  but for reasons we will see below, we hold its current position.  
  
  In total, we performed $n_{12}$ triple backward moves on the smallest $0 \pmod{3}$ singleton.  
  Now, each of these moves allows exactly one triple backward move 
  on each of the larger $0 \pmod{3}$ singletons, 
  yielding the intermediate partition
  \begin{align*}
    1, 7, \ldots, 6n_{11}-5, \mathbf{6n_{11} + 2}, 6n_{11} + 6, 6 n_{11} + 11, 6 n_{11} + 15, \ldots, 
  \end{align*}
  intertwining the $2 \pmod{3}$ and $0 \pmod{3}$ singletons in a tight configuration.  
  This operation reduces the weight of \eqref{basePtnPrimitivemod3} by $3n_{12}n_{10}$.  
  
  We now follow the same path for the smallest $2 \pmod{3}$ singleton $6n_{11} + 2$ 
  and triple move it backward $n_{11}$ times to obtain the configuration
  \begin{align*}
   1, \mathbf{5}, 10, 16, \ldots, 6 n_{11} - 2, 6 n_{11} + 6, 6 n_{11} + 11, 6 n_{11} + 15, \ldots.  
  \end{align*}
  This allows each of the larger $2 \pmod 3$ and $0 \pmod 3$ singletons 
  to be triple moved backward $n_{11}$ times to obtain the configuration 
  \begin{align*}
   \beta = 1, 5, 9, 13, \ldots
  \end{align*}
  with weight 
  \begin{align*}
   3 n_{11}^2 - 2 n_{11} + 3 n_{12}^2 - n_{12} + 3 n_{10}^2 
   + 3 (n_{11}n_{12} + n_{11}n_{10} + n_{12}n_{10}).  
  \end{align*}
  No further triple moves backward is possible for $\beta$.  
  The reason why we did not want the smallest $0 \pmod 3$ singleton further backward 
  in \eqref{basePtnPrimitivemod3interm} is clear now.  
  In the base partition $\beta$ we reached in the end, the smallest $0 \pmod 3$ 
  singleton is larger than the smallest of the other kinds of singletons.  
  
  Incorporation of $1 \pmod 3$ pairs is straightforward: 
  \begin{align*}
   [1, 4], [7, 11], \ldots, [6 n_{21} - 5, 6 n_{21} - 2],
   6 n_{21} + 1, 6 n_{21} + 5, 6 n_{21} + 9, \ldots.  
  \end{align*}
  The inserted pairs have weight $6n_{21}^2 - n_{21}$, 
  and the singletons are triple moved forward twice $n_{21}$ times, 
  adding $6n_{21}(n_{11} + n_{12} + n_{10})$ to the weight.  
  Incorporation of the $2 \pmod 3$ pairs is performed likewise, 
  concluding the proof.  
\end{proof}

{\bf Example: } Here, we will instantiate the construction of the base partition $\beta$ 
with $n_{11} = 2$, $n_{12} = 3$ and $n_{10} = 4$.  
As in \eqref{basePtnPrimitivemod3}, the primitive of the base partition $\beta$ is 
\begin{align*}
 \{ 1, 7 \}, \{ 14, 20, 26 \}, \{ 33, 39, 45, 51 \}, 
\end{align*}
with weight
\begin{align*}
 & 6 n_{21}^2 - n_{21} + 6 n_{22}^2 + n_{22} + 3 n_{11}^2 - 2 n_{11} + 3 n_{12}^2 - n_{12} + 3 n_{10}^2 \\
 & + 12 n_{21} n_{22} + 6 (n_{21} + n_{22})(n_{11} + n_{12} + n_{10})
 + 6 n_{11} n_{12} + 6 n_{11} n_{10} + 6 n_{12} n_{10} = 236.  
\end{align*}
The weight of the base partition $\beta$ we will obtain in the end is expected to be
\begin{align*}
 & 6 n_{21}^2 - n_{21} + 6 n_{22}^2 + n_{22} + 3 n_{11}^2 - 2 n_{11} + 3 n_{12}^2 - n_{12} + 3 n_{10}^2 \\
 & + 12 n_{21} n_{22} + 6 (n_{21} + n_{22})(n_{11} + n_{12} + n_{10})
 + 3 n_{11} n_{12} + 3 n_{11} n_{10} + 3 n_{12} n_{10} = 158.  
\end{align*}
In other words, we are anticipating $n_{11} n_{12} + n_{11} n_{10} + n_{12} n_{10}$ 
$= n_{11} n_{12} + (n_{11} + n_{12}) n_{10}$ triple backward moves on the 
base partition primitive.  
Of these moves, $n_{11} + n_{12}$ will be on the $0 \pmod{3}$ singletons, 
and $n_{11}$ of them will be on each of the $2 \pmod{3}$ singletons.  

We start with the $0 \pmod{3}$ singletons, 
and perform $n_{21}$ triple backward moves on each of them, 
from the smallest to the largest.  
\begin{align*}
 \Bigg\downarrow \textrm{ one triple backward move }
\end{align*}
\begin{align*}
 1, 7, 14, 20, 26, \mathbf{30}, 39, 45, 51
\end{align*}
\begin{align*}
 \Bigg\downarrow \textrm{ another triple backward move }
\end{align*}
\begin{align*}
 1, 7, 14, 20, \underbrace{26, \mathbf{27}}_{!}, 39, 45, 51
\end{align*}
\begin{align*}
 \Bigg\downarrow \textrm{ adjustment }
\end{align*}
\begin{align*}
 1, 7, 14, 20, \mathbf{24}, 29, 39, 45, 51
\end{align*}
\begin{align*}
 \Bigg\downarrow \textrm{ another triple backward move }
\end{align*}
\begin{align*}
 1, 7, 14, \underbrace{20, \mathbf{21}}_{!}, 29, 39, 45, 51
\end{align*}
\begin{align*}
 \Bigg\downarrow \textrm{ adjustment }
\end{align*}
\begin{align*}
 1, 7, 14, \mathbf{18}, 23, 29, 39, 45, 51
\end{align*}
Although we can, we do not move the smallest $0 \pmod{3}$ singleton any further backward.  
We want it to stay larger than the smallest $2 \pmod{3}$ singleton.  
At this point, each of the larger $0 \pmod{3}$ parts can be triple moved backward three times, 
yielding
\begin{align*}
 1, 7, \mathbf{14}, 18, 23, 27, 32, 36, 42.  
\end{align*}
Except for the indicated smallest $2 \pmod{3}$ singleton and the smallest $0 \pmod{3}$ singleton, 
none of the singletons can be moved further back thrice without changing the number of singletons 
or violating the distance 4 condition between singletons.  
We have performed $n_{12} n_{10} = 12$ triple backward moves, 
reducing the weight of the primitive partition by 36.  

Now we will triple move the smallest $2 \pmod{3}$ singleton backward twice, 
because there are two $1 \pmod 3$ singletons smaller than it.  
\begin{align*}
 \Bigg\downarrow \textrm{ a triple backward move }
\end{align*}
\begin{align*}
 1, 7, \mathbf{11}, 18, 23, 27, 32, 36, 42  
\end{align*}
\begin{align*}
 \Bigg\downarrow \textrm{ another triple backward move }
\end{align*}
\begin{align*}
 1, \underbrace{7, \mathbf{8}}_{!}, 18, 23, 27, 32, 36, 42  
\end{align*}
\begin{align*}
 \Bigg\downarrow \textrm{ adjustment }
\end{align*}
\begin{align*}
 1, \mathbf{5}, 10, 18, 23, 27, 32, 36, 42  
\end{align*}
Now we know that all of the larger $2 \pmod{3}$ and $0 \pmod{3}$ singletons 
can be triple moved backward twice, ending the construction.  
\begin{align*}
 1, 5, \underbrace{10, 12}_{!}, 17, 21, 26, 30, 36  
\end{align*}
\begin{align*}
 \Bigg\downarrow \textrm{ adjustment }
\end{align*}
\begin{align*}
 \beta = 1, 5, 9, 13, 17, 21, 26, 30, 36  
\end{align*}
We have made $n_{11}(n_{12} + n_{10}) = 14$ more triple moves backward, 
further reducing the weight by 42.  
Notice that $\beta$ above is not a base partition in the sense of the proof of Theorem \ref{thmMain}, 
but it is a base partition if we require specific numbers of $i \pmod{3}$ singletons for $i = 1, 2, 0$.  
No singleton in $\beta$ can be triple moved backward without reducing the number of singletons.  

If we combine Theorem \ref{thmMod3}, \cite[(2.10)]{Alladi-Gordon}, 
and~\cite[(2.8)]{AAB}, we obtain the following.  

\begin{corollary}
\label{corAlladiAndrewsBerkovich}
\begin{align}
\label{eqCorAlladiAndrewsBrekovich}
    \nonumber
      & \sum_{ \substack{n_{11}, n_{12}, n_{10} \geq 0 \\ n_{21}, n_{22} \geq 0} } \frac
      { q^{ 6 n_{21}^2 - n_{21} + 6 n_{22}^2 + n_{22} + 3 n_{11}^2 - 2 n_{11} + 3 n_{12}^2 - n_{12} + 3 n_{10}^2 } }
      { (q^3; q^3)_{n_{11}} (q^3; q^3)_{n_{12}} (q^3; q^3)_{n_{10}} (q^{6}; q^{6})_{n_{21}} (q^{6}; q^{6})_{n_{22}} }  \\
      \nonumber
      & \times q^{ 12 n_{21} n_{22} + 6 (n_{21} + n_{22})(n_{11} + n_{12} + n_{10})
		  + 3 n_{11} n_{12} + 3 n_{11} n_{10} + 3 n_{12} n_{10}} \\ 
    \nonumber
      & \times a^{ 2 n_{21} + n_{11} + n_{10} } 
      b^{ 2 n_{22} + n_{12} + n_{10} }  \\[10pt]
      & = (-aq;q^3)_\infty (-bq^2; q^3)_\infty.  
\end{align}

\end{corollary}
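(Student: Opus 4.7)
The plan is to specialize Theorem~\ref{thmMod3} via the substitution $c = ab$ and then invoke the Alladi-Gordon refinement of Schur's partition theorem, recorded in~\cite[(2.10)]{Alladi-Gordon} and restated in~\cite[(2.8)]{AAB}. So there is essentially nothing new to do beyond matching up the two tracking conventions.

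First I would verify the identification of the two sides. Under the specialization $c \mapsto ab$, the tracking monomial $a^{2n_{21}+n_{11}} b^{2n_{22}+n_{12}} c^{n_{10}}$ appearing in the series \eqref{eqMod3Series} becomes $a^{2n_{21}+n_{11}+n_{10}} b^{2n_{22}+n_{12}+n_{10}}$, which is exactly the monomial on the left-hand side of \eqref{eqCorAlladiAndrewsBrekovich}. Therefore, by Theorem~\ref{thmMod3}, the left-hand side of \eqref{eqCorAlladiAndrewsBrekovich} equals
\begin{align*}
  \sum_{m_1, m_2, m_0, n \geq 0} st(m_1, m_2, m_0, n)\, a^{m_1+m_0}\, b^{m_2+m_0}\, q^n,
\end{align*}
i.e.\ the generating function for Schur-type partitions where each part $\equiv 1 \pmod 3$ contributes a factor $a$, each part $\equiv 2 \pmod 3$ contributes a factor $b$, and each multiple of $3$ contributes a factor $ab$.

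Second, I would invoke the Alladi-Gordon refinement: weighted in this way, Schur-type partitions are equinumerous with partitions into distinct parts $\equiv 1, 2 \pmod 3$, where a part $\equiv 1 \pmod 3$ is weighted by $a$ and a part $\equiv 2 \pmod 3$ is weighted by $b$. The generating function on the product side is manifestly $(-aq;q^3)_\infty (-bq^2;q^3)_\infty$, which agrees with the right-hand side of \eqref{eqCorAlladiAndrewsBrekovich}. Combining the two reformulations yields the corollary.

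There is no real obstacle: all the combinatorial substance lives in Theorem~\ref{thmMod3}, and the rest is bookkeeping. The only point that warrants care is to make sure one is citing the version of Alladi-Gordon's refinement in which a multiple of $3$ on the Schur side corresponds to the joint contribution of one $\equiv 1$ part and one $\equiv 2$ part on the distinct-parts side, since that is precisely the pairing encoded by the substitution $c = ab$.
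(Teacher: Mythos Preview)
Your proposal is correct and follows essentially the same approach as the paper, which simply states that the corollary is obtained by combining Theorem~\ref{thmMod3} with \cite[(2.10)]{Alladi-Gordon} and \cite[(2.8)]{AAB}. Your write-up in fact makes explicit the substitution $c \mapsto ab$ and the matching of weights that the paper leaves implicit.
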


\section{Comments and Further Problems}

It is possible to construct series for partitions 
satisfying Schur's condition and having smallest part at least 2, 3 or 4 (cf.~\cite{A-Schur}), 
or generating functions for partitions in which parts are at least three apart
and parts that are 1 or 2 $\pmod{3}$, instead of multiples of 3, are at least six apart.  

One open question is the investigation of a proof of Schur's identity and several companions to it 
in the spirit of Andrews' analytic proof of Rogers-Ramanujan-Gordon identities~\cite{PNAS}.  

It must be possible to construct series as generating functions of partitions 
in Andrews' generalization of Schur's theorem~\cite{A-Schur-Gnl, A-Ptn-Diff}.  

Another open problem is the construction of series as generating functions of 
overpartitions~\cite{CL-Overptn} in Lovejoy's and Dousse's extensions of Schur's theorem to 
overpartitions~\cite{Lovejoy-Schur-Overptn, Dousse-Schur1, Dousse-Schur2, Dousse-Schur1, Dousse-Schur2}.  
There is also a combinatorial proof of results from~\cite{Lovejoy-Schur-Overptn} 
in~\cite{Rag-Pad}.  


\section*{Acknowledgements}
We thank George E. Andrews for pointing out the questions that are answered 
in sections \ref{secMod2} and \ref{secMod3}; 
and Jehanne Dousse for useful discussions during the preparation of this manuscript.  
We also thank Jeremy Lovejoy for notifying us of the incorrect statement of 
Corollary \ref{corAlladiAndrewsBerkovich} stemming from a mistake in 
the statement of Theorem \ref{thmMod3} and 
for completing the list of references for Schur's theorem 
for overpartitions~\cite{Lovejoy-Schur-Overptn, Rag-Pad}

\end{document}